\documentclass[oneside,12pt]{amsart}

\usepackage{amsfonts}
\usepackage{amssymb}
\usepackage{amsxtra}
\usepackage{amstext}
\usepackage[english]{babel}
\usepackage[T1]{fontenc}
\usepackage{latexsym}
\usepackage[latin1]{inputenc}

\newtheorem{theorem}{Theorem}[section]
\newtheorem{lemma}[theorem]{Lemma}
\newtheorem{corollary}[theorem]{Corollary}
\newtheorem{proposition}[theorem]{Proposition}
\newtheorem{example}[theorem]{Example}
\newtheorem{remark}[theorem]{Remark}

\def\bit{\begin{itemize}}
\def\eit{\end{itemize}}
\reversemarginpar   
\def\bc{\begin{center}}
\def\ec{\end{center}}
\def\bthm{\begin{theorem}}
\def\ethm{\end{theorem}}
\def\bcor{\begin{corollary}}
\def\ecor{\end{corollary}}
\def\bprop{\begin{proposition}}
\def\eprop{\end{proposition}}
\def\blem{\begin{lemma}}
\def\elem{\end{lemma}}

\def\brem{\begin{remark}}
\def\erem{\end{remark}}

\def\bdes{\begin{description}}
\def\edes{\end{description}}

\def\iti{\item[(i)]}
\def\itii{\item[(ii)]}

\def\beq{\begin{equation}}
\def\eeq{\end{equation}}
\def\ben{\begin{enumerate}}
\def\een{\end{enumerate}}
\def\beqar{\begin{eqnarray}}
\def\eeqar{\end{eqnarray}}
\def\beqarr{\begin{eqnarray*}}
\def\eeqarr{\end{eqnarray*}}


\def\RR{{\mathbb R}}  

\def\EE{{\mathbb E}}
\def\cA{\mathcal{A}}  
 \def\cE{\mathcal{E}} \def\cF{\mathcal{F}}
\def\cG{\mathcal{G}}


\def\supp{\textrm{supp}}

\def\P{{\mathsf P}} 
\def\E{{\mathsf E}} 
\def\V{{\mathsf V}} 
\def\ZZ{{\mathbb Z}}       
\def\NN{{\mathbb N}}       
\def\one{{\bf 1}}


\def\eps{\epsilon}

\def\l{\ell}
\def\la{\langle}
\def\ra{\rangle}

\def\part{\partial}

\def\d#1dt{\frac{d#1}{dt}}    



\keywords{Vertex reinforced random walk; strongly reinforced regime; complete graph.}

\subjclass[2010]{60K35}

\begin{document}
\title[Strongly VRRW on a complete graph]{Strongly Vertex-Reinforced-Random-Walk on a complete graph}

\author{Michel Benaim, Olivier Raimond, Bruno Schapira}

\begin{abstract} We study Vertex-Reinforced-Random-Walk (VRRW) on a complete graph with weights of the form 
$w(n)=n^\alpha$, with $\alpha>1$. Unlike for the Edge-Reinforced-Random-Walk, which in this case localizes a.s. on $2$ sites, 
here we observe various phase transitions, and in particular localization on arbitrary large sets is possible, provided $\alpha$ is 
close enough to $1$. Our proof relies on stochastic approximation techniques. At the end of the paper, we also prove 
a general result ensuring that any strongly reinforced VRRW on any bounded degree graph localizes a.s. on a finite subgraph.  
\end{abstract}

\maketitle
\section{Introduction}

This paper considers a Vertex-Reinforced Random Walk (VRRW) on a finite  complete graph
with  weights $w_\alpha(n):= (n+1)^\alpha$ in the strongly reinforced regime $\alpha > 1.$

Such a process is a discrete time random process $(X_n)_{n\ge 0}$ living
in   $E = \{1,\dots,N\}$ and such that  for all $n \geq 0$ and $j \in E$,
$$\P(X_{n+1}=j\mid \cF_n) = \frac{w_\alpha(Z_n(j))}{\sum_{k\neq X_n} w_\alpha(Z_n(k))} \mathbf{1}_{\{X_n \neq j\}},$$
where $Z_n(j):=\sum_{\ell=0}^n 1_{\{X_\ell=j\}}$ is the number of jumps to site $j$ before time $n$,
and $\cF_n=\sigma(X_k;\;k\le n)$.

The linear regime (i.e  $\alpha=1$) has been  initially introduced by Pemantle \cite{P} on a finite graph and then extensively studied for different type of graphs by several authors (\cite{PV,T,V,LV,BT}).

\medskip
The main  result of the present paper is the following:

\bthm \label{maintheo} Let $N\ge 2$ and $\alpha>1$ be given. Then the following properties hold.
\bit
\iti
 With probability one there exists $2\le \ell \le N$,
such that $(X_n)$
visits exactly $\ell$ sites infinitely often, and the
 empirical occupation measure  converges towards the uniform measure on these $\ell$ sites.

\itii
Let $3\le k\le N.$ If $\alpha > (k-1)/(k-2)$, then the
probability to visit strictly more than $k-1$ sites infinitely often is zero.

If $\alpha < (k-1)/(k-2)$, then for any $2\le \ell \le k$, the probability
that exactly $\ell$ sites are visited infinitely often is positive.
\eit
\ethm
This result has to be compared with the situation for Edge Reinforced Random Walks (ERRW).  Limic \cite{L} (see also \cite{LT}),
proved that for any $\alpha>1$, and for  any free loops graph with bounded degree,
the ERRW with weight $w_\alpha$
visits only $2$ sites infinitely often.
It has also to be compared to the situation on the graph $\ZZ$, where for any $\alpha>1$,
the VRRW with weights $(w_\alpha(n))$ visits a.s. only $2$ sites infinitely often.

It might be interesting to notice also that when we add one loop to each site, i.e. when at each step,
independently of the actual position of the walk, the probability to jump to some site $i$ is proportional
to $w_\alpha(Z_n(i))$, then Rubin's construction (see \cite{D})
immediately shows that the walk visits a.s. only one site
infinitely often.
In fact with our techniques we can study a whole family of processes which interpolate between
these two examples: for $c\ge 0$, consider the process with transitions probabilities given by
$$\P(X_{n+1}=j\mid \cF_n) = \frac{w_\alpha(Z_n(j))\mathbf{1}_{\{X_n \neq j\}} + c\, w_\alpha(Z_n(j))\mathbf{1}_{\{X_n = j\}} }{\sum_{k\neq X_n} w_\alpha(Z_n(k)) + c\, w_\alpha(Z_n(X_n))},$$
with the same notation as above. The case $c=0$ corresponds to the VRRW on a free loop complete graph, and the case $c=1$
corresponds to the VRRW on a complete graph with loops. Then for any $c\ge 1$, the process visits a.s. only $1$ site infinitely often, and when $c\in (0,1)$, various phase transitions occur, exactly as in Theorem \ref{maintheo},
except that the critical values are this time equal to $[k-(1-c)]/[k-2(1-c)]$, for $2\le k\le N$ and localization on $1$ site is always possible and occurs
even a.s. when $\alpha> (1+c)/(2c)$. Since the proofs of these results are exactly similar as those for Theorem \ref{maintheo}, we
will not give further details here.

Finally let us observe that similar phase transitions as in Theorem \ref{maintheo} have been observed in some random graphs models,
see for instance \cite{CHJ,OS}.

The paper is organised as follows. In the next two sections we prove some general results 
for VRRW with weight $w_\alpha$, on arbitrary finite graphs, which show that in the good cases 
a.s. the empirical occupation measure of the walk converges a.s. to linearly stable equilibria of 
some ordinary differential equation. In section \ref{section-complet} we study in detail the case of 
a complete graph, we describe precisely the set of linearly stable equilibria in this case, and deduce
our main result, Theorem \ref{maintheo}. Finally in the last section we prove a general result 
ensuring that strongly reinforced VRRWs on arbitrary bounded degree graphs localize a.s. on a finite subset.

\section{A general formalism for VRRW}
We present here a general and natural framework for studying VRRW based on the formalism and results introduced in \cite{B} and \cite{BR}. Such a formalism heavily relies on stochastic approximation technics and specifically the dynamical system approach developed in \cite{B2}.


Let $A=(A_{i,j})_{i,j\le N}$ be a $N\times N$ symmetric matrix with nonnegative entries. We assume   that $A_{i,j}> 0$ for  $i\neq j$, and
 that $\sum_j A_{i,j}$ does not depend on $i$.
Let $\alpha>1$ be given. We consider the process $(X_n)_{n\ge 0}$ living in $E = \{1,\ldots,N\}$, with transition probabilities given by
\beqarr
\P(X_{n+1}=j \mid \cF_n)
= \frac{A_{X_n,j}(1+Z_n(j))^\alpha}{\sum_{k\le N} A_{X_n,k}(1+Z_n(k))^\alpha},
\eeqarr
where $Z_n$ is defined like in the introduction.
The case of the VRRW on a complete graph
is obtained by taking
\beq
\label{basicA}
A_{i,j}=1 - \delta_{ij}.
\eeq
For $i\le N$, set $v_n(i)=Z_n(i)/(n+1).$
Note  that if  $A_{i,i}=0$ and $X_k=i$ then $X_{k+1}\neq i$.
In particular for any such $i$, and $n\ge 1$, $v_n(i)\le (1/2+1/(n+1))\le 3/4$.
In other words, for all $n\ge 1$, $v_n$ belongs to the {\em reduced simplex}
 $$\Delta := \left\{v \in \mathbb{R}^N_{+}\ :\  v_i \leq 3/4 \mbox{ if } A_{ii} = 0 \mbox{ and } \sum_i v_i=1 \: \right \}.$$
In the following, we might sometimes view an element $f=(f_i)_{i\le N} \in \RR^N$ as a function on $E$,
and so we will also use the notation $f(i)$ for $f_i$.

Now for $\epsilon \in [0,1]$ and $v\in \Delta$ we let $K(\eps,v)$ denote the transition matrix defined by
\beq
\label{defKeps}
K_{i,j}(\eps,v):=\frac{A_{i,j}(\eps+v_j)^\alpha}{\sum_k A_{i,k}(\eps+v_k)^\alpha},\eeq
for all $i,j\le N.$
To shorten notation we let
\beq
\label{defK}
K(v) : = K(0,v).
\eeq
Two obvious, but key, observations are that
$$\P(X_{n+1}=j\mid \cF_n) = K_{X_n,j}((n+1)^{-1},v_n)),$$
and
$$\lim_{n \rightarrow \infty} K((n+1)^{-1},v) = K(v).$$
Hence, relying on \cite{B} and \cite{BR}, the behavior of $(v_n)$ can be analyzed through the ordinary differential equation
$\dot{v} = - v + \pi(v)$, where $\pi(v)$ is the invariant probability measure of the Markov chain with transition matrix $K(v).$


\section{The limit set theorem}
\subsection{The limiting differential equation and its equilibria}
For $v\in \Delta,$ set
$v^\alpha=(v_1^\alpha,\dots,v_N^\alpha),$
and
\beq\label{deflyap} H(v):=\sum_{i,j}A_{i,j}v_i^\alpha v_j^\alpha=\la Av^\alpha,v^\alpha\ra. \eeq
Note that $H$ is positive on $\Delta$. Hence one can define
$$\pi_i(v)=\frac{ v_i^\alpha (Av^\alpha)_i}{H(v)},  \quad i = 1, \ldots, N.$$
From the relation
$$v_j^\alpha (Av^\alpha)_i K_{i,j}(v)=A_{i,j}v_i^\alpha v_j^\alpha,$$
it follows that $K(v)$ is reversible with respect to $\pi(v).$

For $r = 0,1$ set
$T_r \Delta = \left\{v \in \mathbb{R}^N \ :\ \sum_i v_i = r\right \}$ and let
 $\imath : T_1\Delta \to \Delta$ be the map defined  by
$\imath(v)  =  \mathsf{argmin} \{\|y-v\| \ :\ y \in \Delta \}.$ Since $\Delta$ is convex
$\imath$ is a Lipschitz retraction from $T_1 \Delta$ onto $\Delta.$

Let now $ F : T_1 \Delta \to T_0 \Delta$ be the vector field defined as
\begin{equation}
\label{vecF}
 F(v)=-v+\pi(\imath(v)).
\end{equation}
Note that $F$ is Lipschitz. Thus by standard results,  $F$ induces a global flow $\Phi : \RR \times T_1\Delta \to T_1\Delta$
where for all $x \in T_1 \Delta, \, t \mapsto \Phi(t,x) := \Phi_t(x)$ is the solution to $\dot v = F(v)$ with initial condition $v(0) = x.$

For any subset $I\subset \{1,\dots,N\}$, we let
$$\Delta_I:=\{v\in \Delta\ :\ v_i=0 \quad \forall i \in E \setminus I\},$$
denote the $I$-\textit{face} of $\Delta$.
We let $int \Delta_I = \{v \in \Delta_I \: : v_i > 0  \quad \forall i \in  I\}$ denote the (relative) interior of $\Delta_I.$
For $v \in \Delta$ we let $supp(v) = \{i \in E \: : v_i \neq 0\},$ so that $v$ always lies in the $supp(v)$-\textit{face}
of $\Delta.$
\blem
The flow $\Phi$ leaves $\Delta$ positively invariant: $\forall t \geq 0, \, \Phi_t(\Delta) \subset \Delta$; and

for each $I \subset E$, the face $\Delta_I$ is locally invariant: $\forall v \in \Delta_I, \,  \forall t \in \RR$,
$\Phi_t(v) \in \Delta \Leftrightarrow\Phi_t(v) \in \Delta_I$.
\elem

\begin{proof}
For all $v \in \Delta$, $\pi(v)$ lies in $\Delta$. Indeed for the Markov chain having transition matrix  $K(v)$ the empirical occupation measure lies in $\Delta$ and by the ergodic theorem (for finite Markov chains) the same is true for $\pi(v).$
Hence $F(v)$ points inward $\Delta$ for all $v \in \Delta$, proving that $\Delta$ is positively invariant.
Since $F_i(v) = 0$ when $v_i = 0$, each face is locally invariant.
\end{proof}

Let $$\mathcal{C} = \{v \in \Delta \: : F(v) = 0 \},$$
denote the {\em equilibria set} of $F$.
Relying on stochastic approximation theory \cite{B,B2,BR} it will be shown below (Proposition \ref{converge}) that $(v_n,n\ge 1)$
converges almost surely to $\mathcal{C}$.

The next result is similar to the case $\alpha=1$ (see for instance \cite{P}):
\blem
\label{lemeq}
The map  $H : \Delta \to \RR $ is a {\em strict Lyapunov function}, meaning that
$\la \nabla H(v),F(v)\ra$ is positive for all $v\in \Delta \setminus \mathcal{C}$.
\elem
\begin{proof}  One has
$\partial_i H(v)=2\alpha  v_i^{\alpha-1} (Av^\alpha)_i$. Thus
\beqarr
\la \nabla H(v),F(v)\ra
&=& \sum_i 2\alpha  v_i^{\alpha-1} (Av^\alpha)_i \left(-v_i + \frac{ v_i^\alpha (Av^\alpha)_i}{H(v)}\right)\\
&=& \frac{2\alpha}{H(v)} \left( - \big{(}\sum_i  v_i^{\alpha-1} (Av^\alpha)_i  v_i\big{)} ^2 +  \sum_i \big{(}v_i^{\alpha-1} (Av^\alpha)_i\big{)}^2 v_i \right)\\
&\ge& 0,
\eeqarr
with equality only when $v_i^{\alpha-1} (Av^\alpha)_i$ does not depend on $i\in \supp(v)$, i.e. only when $v$ is an equilibrium.
\end{proof}
\brem {\em The barycenter $v=(1/N,\dots,1/N)$ is always an equilibrium. }\erem
\blem $H(\mathcal{C})$ has empty interior. \elem
\begin{proof} The computation of $\partial_i H(v)$ shows that
$$(\partial_i - \partial_j)H(v) =  2\alpha  (v_i^{\alpha-1} (Av^\alpha)_i -  v_j^{\alpha-1} (Av^\alpha)_j).$$
Hence, for all $v$ in the relative interior of $\Delta,$ $F(v) = 0 \Leftrightarrow \nabla H(v) = 0.$
In other word $\mathcal{C} \cap int{\Delta} = \nabla H^{-1}(0)  \cap int{\Delta}.$
By Sard's theorem, it follows that $H(\mathcal{C} \cap int{\Delta})$ has measure zero, hence empty interior.
Similarly, for each face $I$, $H(\mathcal{C} \cap int{\Delta_I})$ has empty interior. This proves the lemma.
\end{proof}
 \bprop
 \label{converge}
 The set of limit points of $(v_n)$ is a connected subset of $\mathcal{C}$.
 \eprop
\begin{proof} By proposition 3.3 and Theorem 3.4 in \cite{B} or Proposition 4.6 in \cite{BR}, we get that the limit set of $(v(n))$ is an internally chain transitive
set for $\Phi$.
Since $H$ is a strict Lyapunov function and $H(\mathcal{C})$ has empty interior,
it follows from Proposition 6.4 in \cite{B2} that such a limit set is contained in  $\mathcal{C}$.
\end{proof}

In particular, when all the equilibria of $F$ are isolated, then $v_n$ converges a.s. toward one of them, as $n\to \infty$.

\brem
{\em When $A$ is not symmetric, the convergence result given by Proposition \ref{converge} fails to hold.
Indeed, an example is constructed   in \cite{B}  with $N = 3$ and  $\alpha = 1$
for which the limit set of $(v_n)_{n\ge 1}$ equals $\partial \Delta$.
This behavior persists for  $\alpha = 1 + \epsilon$ and $\epsilon > 0$ small enough. }
\erem

\subsection{Stable and unstable equilibria.}
An   equilibrium $v$ is called {\it linearly stable} provided all the eigenvalues of $DF(v)$, the differential of $F$ at
point $v$, have negative real parts.
It is called {\it linearly unstable} if one of its eigenvalues has a positive real part.

Now we will see with the next result that to study the stability or instability
of the equilibria, it suffices in fact to consider only those which belong to the interior of $\Delta$.
In the following we let $(e_1,\dots,e_N)$ denote the canonical basis of $\RR^N$.

\blem \label{stabface} Let $v$ be an equilibrium. Then, for $i,j\in \supp(v)$, we have
$$D_{e_i-e_j}F(v)=(\alpha - 1) (e_i-e_j) +\alpha\sum_{\ell} \frac{v_{\ell}^\alpha (A_{\ell,i} v_i^{\alpha-1}-A_{\l,j} v_j^{\alpha-1})}{H(v)} e_{\ell},$$
and for $i\notin \supp(v)$,
$$D_{e_i-v}F(v)=-(e_i-v).$$
Furthermore, the eigenvalues of $DF(v)$ are all reals.
\elem
\begin{proof}
For any $i,j\le N$, and $v\in \Delta$,
\beqarr
\partial_j (Av^\alpha)_i &=& \alpha A_{i,j}v_j^{\alpha -1},
\eeqarr
and then by using that $A$ is symmetric, we get
\beqarr
\partial_j H(v) &=& 2\alpha v_j^{\alpha-1} (Av^\alpha)_j.
\eeqarr
Thus
\beqarr
\partial_j \pi_i(v) &=& \delta_{i,j} \alpha \frac{ v_i^{\alpha-1} (Av^\alpha)_i}{H(v)} + \alpha \frac{ v_i^{\alpha} A_{i,j}v^{\alpha-1}_j}{H(v)}\\
&& -2\alpha \frac{ v_i^{\alpha} (Av^\alpha)_i}{H^2(v)} v_j^{\alpha-1} (Av^\alpha)_j.
\eeqarr
Now assume that $v$ is an equilibrium, and let $i,j\in \supp(v)$. We get with Lemma \ref{lemeq}
\beqarr
\partial_j \pi_i(v) &=& \delta_{i,j} \alpha+ \alpha \frac{ v_i^{\alpha} A_{i,j}v^{\alpha-1}_j}{H(v)} -2\alpha v_i,
\eeqarr
and then
$$\partial_j F_i(v) = \delta_{i,j} (\alpha-1)+ \alpha \frac{ v_i^{\alpha} A_{i,j}v^{\alpha-1}_j}{H(v)}-2\alpha v_i.$$
On the other hand if $v_i=0$ or $v_j=0$, then
$$\partial_j \pi_i(v)=0,$$
and thus
$$\partial_j F_i(v) = - \delta_{i,j}.$$
The first part of the lemma follows. To see that eigenvalues are real, we may assume without loss of generality that $v \in int(\Delta).$ Note that $\partial_j F_i(v) v_j = \partial_i F_j(v) v_i.$
Therefore, the transpose of $DF(v)$ is self adjoint with respect to the dot product $(x,y) = \sum_i v_i x_i y_i$, and this concludes the proof of the lemma.
\end{proof}

\medskip
\noindent As announced above we deduce
\bcor
\label{corstabface}
An equilibrium on a face is linearly stable (respectively unstable), if and only if, it is so
for the restriction of $F$ to this face.
\ecor
\begin{proof}
Indeed, assume that $v$ is an equilibrium on a face $\Delta_I$ associated to some subset $I$. Then
the previous lemma shows that for any $i\notin I$, $e_i-v$ is a stable direction. So the result
of the corollary follows from our definitions of stable and unstable equilibria.
\end{proof}

In other words to study the stability or instability of equilibria, and we will see in the next two subsections why this
question is important, it suffices to consider those belonging to the
interior of $\Delta$.

\subsection{Non convergence towards unstable equilibria}
The purpose of this section is to prove the following result.
\bthm \label{nonconv}
Let $v^*$ be a linearly unstable equilibrium.
Then the probability that $v_n$ converges towards $v^*$ is equal to $0$.
\ethm
\begin{proof}
Let us recall now that for $g\in \RR^N$ and $i\in E$, we use the notation $g(i)=g_i$.
For $u,v \in \RR^N$, we also set $uv := \sum_i u_iv_i$, and $\|u\|=\sup_i |u_i|$.
Furthermore, $C$ will denote a non-random constant that may vary from lines to lines.

For $v\in \Delta$, let $Q(v)$ be the pseudo-inverse of $K(v)$ defined by:
$$(I-K(v)) Q(v)g = Q(v) (I-K(v))g = g - (\pi(v) g) \mathbf{1},$$
for all $g\in \RR^N$, with $I$ is the identity matrix and $\mathbf{1}(i)=1$ for all $i\in E$.
Then by a direct application of the implicit function theorems (see Lemma 5.1 in \cite{B}) one has
\blem \label{propQv}
For any $v\in \Delta$ and $i\in E$,
$Q(v)$, $K(v)Q(v)$, and $(\partial/\partial_{v_i}) (K(v)Q(v))$, are bounded operators on $\ell^\infty(E)$,
the space of bounded functions on $E$, and their norms are uniformly bounded in $v \in \Delta$.
\elem

Now for all $n\ge 1$ and $i\in E$, we can write
$$v_{n+1}(i)-v_n(i) = \frac{1}{n+1}\big(-v_n(i) + e_i(X_{n+1})\big).$$
Note that $K\big((n+1)^{-1},v_n\big) = K(\tilde{v}_n)$, where
$$\tilde v_n(i) = \frac{v_n(i) + 1/(n+1)}{1+N/(n+1)} \quad \hbox{ for } 1\le i\le N.$$
Note also that $\|\tilde{v}_n-v_n\|\le C/n$. Let next $z_n$ be defined by
$$z_n(i)=v_n(i) + \frac{K(\tilde{v}_n)Q(\tilde{v}_n)e_i(X_n)}{n} \quad \hbox{ for } 1\le i\le N.$$
Then Lemma \ref{propQv} implies that $\|z_n-v_n\|\le C/n$. Moreover, we can write:
\beq
\label{robbins}
z_{n+1}-z_n = \frac{F(z_n)}{n+1} + \frac{\eps_{n+1}}{n+1} + \frac {r_{n+1}}{n+1},
\eeq
where $\eps_{n+1}$ and $r_{n+1}$ are such that for all $1\le i\le N$,
$$\eps_{n+1}(i):=Q(\tilde{v}_n)e_i(X_{n+1})-K(\tilde{v}_n)Q(\tilde{v}_n)e_i(X_n),$$
and $r_{n+1} = \sum_{k=1}^4 r_{n+1,k}$, with
\beqarr
r_{n+1,1} &=& F(v_n)-F(z_n) \\
r_{n+1,2} &=& \pi(\tilde{v}_n) - \pi(v_n) \\
r_{n+1,3}(i) &=& K(\tilde{v}_n)Q(\tilde{v}_n)e_i(X_{n})\, \left(1 - \frac {n+1} n \right)\\
r_{n+1,4}(i) &=& K(\tilde{v}_{n+1})Q(\tilde{v}_{n+1})e_i(X_{n+1})-K(\tilde{v}_n)Q(\tilde{v}_n)e_i(X_{n+1}),
\eeqarr
for $1\le i\le N$.

By using the facts that $F$ and $\pi$ are Lipschitz functions on $\Delta$,  $\|v_n-\tilde{v}_n\|+\|v_n-z_n\| \le C/n$, and by applying Lemma \ref{propQv},
we deduce that
$$\|r_{n+1}\| \le C/n.$$
Moreover, we have
$$\E[\eps_{n+1}\mid \cF_n]=0.$$
Since $v^*$ is linearly unstable there exists, by Lemma \ref{stabface}, $f \in T_0 \Delta$ and $\lambda > 0$ such that
 \bdes
\iti $D_f F (v) = \lambda f$ and,
\itii  $ f_i = 0$ for $i \in E \setminus supp(v^*).$
 \edes

Such an $f$ being fixed, we claim that on the event $\{v_n\to v^*\}$,
\begin{eqnarray}
\label{liminf}
\liminf_{n\to \infty}\, \left\{\E[(\eps_{n+2}f)^2\mid \cF_{n+1}]+ \EE[(\eps_{n+1}f)^2\mid \cF_n]\right\}> 0.
\end{eqnarray}
To prove this claim, we first introduce some  notation:
for $\mu$ a probability measure on $E$, and $g\in\RR^N$, denote by $\V_\mu(g)$ the variance of $g$
with respect to $\mu$
$$\V_\mu(g) :=   \frac 12 \sum_{1\le j,k\le N} \mu(j)\mu(k) \big(g(j)-g(k)\big)^2.$$
Then for any $n\ge 0$ and $i\le N$, let $\mu_{n,i}$ be
the probability measure defined by  $\mu_{n,i}(j) = K_{i,j}(\tilde{v}_n)$. Set also $V_n(i):=\V_{\mu_{n,i}}(Q(\tilde v_n)f)$.
Then we have that
$$\E[(\eps_{n+1}f)^2\mid \cF_n] = V_n(X_n).$$
Furthermore, when $v_n$ converges toward $v^*$, $K(\tilde{v}_n)$ and $Q(\tilde{v}_n)$ converge respectively toward $K(v^*)$ and $Q(v^*)$.
Thus, for any $i\in E$,
$$ \liminf_{n\to \infty}\, V_n(i) \ge  V^*(i):= \V_{\mu^*_i}(Q(v^*)f),$$
where $\mu^*_i(j)=K_{i,j}(v^*)$, for all $j\le N$.
Next by using the fact that when $X_n=i$ and $A_{i,i}=0$, then $X_{n+1}\neq X_n$,
we get that
$$\liminf \left\{\E[(\eps_{n+2}f)^2\mid \cF_{n+1}]+\E[(\eps_{n+1}f)^2\mid \cF_n]\right\}\ge \min_i\, c^*(i),$$
where
\begin{eqnarray*}
c^*(i) &:=& \min_{j \in \cA_i } (V^*(i)+V^*(j)),
\end{eqnarray*}
and
\begin{eqnarray*}
\cA_i : = \left\{ \begin{array}{ll}
E & \textrm{if }A_{i,i}\neq 0 \\
E\smallsetminus\{i\} & \textrm{if }A_{i,i} = 0.
\end{array}
\right.
\end{eqnarray*}
Now by using that
$$Q(v^*)f - K(v^*)Q(v^*)f = f- (v^*f)\mathbf{1}$$
(recall that $\pi(v^*)=v^*$),
we see that $Q(v^*)f$ has constant coordinates on $\supp(v^*)$,
if and only if, $f$ has constant coordinates on $\supp(v^*)$.
But since $f\in T_0\Delta$ and $f_i = 0$ for $i \not \in \supp(v^*)$; this cannot be the case.
Since $\mu^*_i(j)>0$, when $j\neq i$ and $j\in \supp(v^*)$, it follows already that $c^*(i) >0$,
for all $i\notin \supp(v^*)$. Now let $i\in \supp(v^*)$ be given. If $A_{i,i}\neq 0$, then again we have $V^*(i)>0$,
and thus $c^*(i)>0$. Now assume that $A_{i,i}=0$. If $\# \supp(v^*)\ge 3$, then there can
be at most one value of $i$, for which
$V^*(i)=0$, and thus in this case we have $c^*(i)>0$ as well.
Let us then consider the case when
$\# \supp(v^*) = 2$, and say $\supp(v^*)= \{i,j\}$. Recall that if $A_{i,i}\neq 0$, then $V^*(i)>0$.
However, we cannot have $A_{i,i}=A_{j,j}=0$, since otherwise $v^*_i=v^*_j=1/2$ and 
by lemma \ref{stabface} $v^*$ is linearly stable. 
Finally we have proved that in any case $\min_i\, c^*(i)>0$.
Theorem \ref{nonconv} is then a consequence of \eqref{liminf} and Corollary 3.IV.15 p.126 in \cite{Du}.
\end{proof}

\subsection{Convergence towards stable equilibria and localization}

\bthm 
\label{convpos}
Let $v^*$ be a linearly stable equilibrium. Then the probability that $v_n$ converges towards $v^*$ is positive.
\ethm
\begin{proof} follows from Corollary 6.5 in \cite{B} since any linearly stable equilibrium is a minimal attractor. \end{proof}

\bthm 
\label{convpos2}
Let $v^*\in \Delta$ be a linearly stable equilibrium.
Then a.s. on the event $\{\lim_{n\to \infty} v_n = v^*\}$, the set $E \setminus \supp(v^*)$ is visited only finitely many times.
\ethm
The proof follows directly from the next two lemmas:
\blem There exists $\nu>0$ such that on the event $\{\lim_{n\to \infty} v_n = v^*\}$,
$$\lim_{n\to\infty} n^\nu \|v_n-v^*\| = 0.$$
\elem
\begin{proof} This is similar to Lemma 8 in \cite{BT}. We give here an alternative and more direct proof relying on \cite{B2}.
Since $v^*$ is a linearly stable  equilibrium there exists a neighborhood $B$ of $v^*,$ a constant $C > 0$ and $\lambda > 0$,
such that $\|\Phi_t(v)-v^*\| \leq C e^{-\lambda t}$, for all $v \in B$ (see e.g \cite{R}, Theorem 5.1).
Let $\tau_n = \sum_{k = 1}^n 1/k$ and let  $V : \RR^+ \to \Delta$ denote the continuous time piecewise affine process defined by
a) $V(\tau_n) = z_n$ and  b) $V$ is affine on $[\tau_n, \tau_{n+1}]$.
By  (\ref{robbins}) and Doob's inequalities, the interpolated process $V$  is almost surely a {\em $-1/2$ asymptotic pseudo trajectory} of $\Phi$, meaning that
$$\limsup_{t \rightarrow \infty} \frac{1}{t} \log \left(\sup_{0 \leq h \leq T} \|\Phi_h(V(t) - V(t+h)\|\right) \leq -1/2$$ for all $T > 0.$
For a proof of this later assertion see \cite{B2}, Proposition 8.3.
Now, by Lemma 8.7 in \cite{B2}
$$\limsup_{t \rightarrow \infty} \frac{1}{t}\log ( \|V(t)-v^*\| ) \leq  - \min (1/2,\lambda)$$
on the event $\{v_n \rightarrow v^*\}$.
This proves that $\|z_n-v^*\|=O(n^{- \min(1/2,\lambda}))$,
which concludes the proof of the lemma. \end{proof}

\begin{lemma}
For any $I\subseteq \{1,\dots,N\}$, and $\nu\in (0,1)$, a.s. on the event
$$E_\nu(I) := \{ \lim_{n\to \infty}\ v_n(i)\, n^\nu =0  \quad \forall i\in I\},$$
the set $I$ is visited only finitely many times.
\end{lemma}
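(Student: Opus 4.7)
By the conditional Borel--Cantelli lemma, it is enough to prove that, almost surely on $E_\nu(I)$,
\[
\sum_{n}\P(X_{n+1}\in I\mid\cF_n)<\infty.
\]
To bound the one-step probability, observe that since $A_{i,j}>0$ for $i\neq j$, and since $\sum_k v_n(k)=1$ combined with the constraint $v_n(k)\le 3/4$ (for indices $k$ with $A_{kk}=0$) forces the existence, given any $X_n$, of some $k\neq X_n$ with $A_{X_n,k}>0$ and $v_n(k)$ bounded below by a universal positive constant, the denominator of $K_{X_n,j}((n+1)^{-1},v_n)$ stays uniformly $\ge c_0>0$. Hence there is $C>0$ with
\[
\P(X_{n+1}\in I\mid\cF_n)\le C\sum_{i\in I}\bigl((n+1)^{-1}+v_n(i)\bigr)^\alpha.
\]
If $\nu\alpha>1$, we are done: on $E_\nu(I)$ the right-hand side is $o(n^{-\nu\alpha})$, which is summable.

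For general $\nu\in(0,1)$ I would bootstrap. Fix $\delta>0$ and, for each $M\ge1$, introduce the $\cF_n$-stopping time
\[
\sigma_{M,\delta}:=\inf\{n\ge M:\ v_n(i)>\delta\,n^{-\nu}\text{ for some }i\in I\}.
\]
On $E_\nu(I)$ one has $\sigma_{M,\delta}=\infty$ for all $M$ large enough (depending on $\omega$), so $E_\nu(I)\subseteq\bigcup_M\{\sigma_{M,\delta}=\infty\}$ up to a null set. On $\{\sigma_{M,\delta}=\infty\}$, the transition bound yields $\P(X_{n+1}=i\mid\cF_n)\le C\delta^\alpha n^{-\nu\alpha}$ for $M\le n$ and $i\in I$, so the compensator of the stopped count $Z_{n\wedge\sigma_{M,\delta}}(i)$ is of order $n^{1-\nu\alpha}$ (of order $\log n$ at the critical exponent $\nu\alpha=1$), while Burkholder--Davis--Gundy applied to the associated martingale (whose increments are bounded by $1$) gives fluctuations of order $n^{(1-\nu\alpha)/2+o(1)}$ almost surely. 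Therefore $v_n(i)=O(n^{-\nu\alpha+o(1)})$ almost surely on $\{\sigma_{M,\delta}=\infty\}$, and consequently $E_\nu(I)\subseteq E_{\nu'}(I)$ almost surely for every $\nu'<\nu\alpha$.

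Iterating this improvement replaces the exponent $\nu$ by $\nu\alpha$ minus an arbitrarily small loss at each step, and because $\alpha>1$ the exponent grows geometrically past $1/\alpha$ in finitely many iterations, at which point the easy case closes the argument. The main obstacle is setting up this bootstrap cleanly: the hypothesis $v_n(i)n^\nu\to 0$ is only asymptotic, so it cannot be directly inserted into the drift and bracket estimates; the stopping time $\sigma_{M,\delta}$ is what converts it into a pathwise-uniform bound valid up to time $\sigma_{M,\delta}$, after which BDG routinely upgrades the exponent and the iteration is mechanical.
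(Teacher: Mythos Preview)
Your proof is essentially the paper's: both establish the bootstrap inclusion $E_\nu(I)\subseteq E_{\nu'}(I)$ a.s.\ for every $\nu'<\alpha\nu\wedge 1$, iterate until the exponent exceeds $1/\alpha$, and then finish with Borel--Cantelli. The paper implements the bootstrap step by coupling with independent Bernoulli variables of parameter $p_k\asymp k^{-\alpha\nu}$ on the events $E_{m,\nu}(I)=\{v_k(i)\le k^{-\nu}\ \forall k\ge m,\ i\in I\}$, whereas you use the compensator/martingale decomposition of the stopped count and BDG; both yield the same exponent gain $\nu\mapsto \alpha\nu$. One small imprecision: your claim that some $k\neq X_n$ must have $v_n(k)$ bounded below can fail when $A_{X_n,X_n}>0$ (then $v_n(X_n)$ may carry almost all the mass), but in that case the diagonal term $A_{X_n,X_n}((n+1)^{-1}+v_n(X_n))^\alpha$ itself bounds the denominator from below, so the conclusion $\ge c_0>0$ stands.
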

\begin{proof}
For $m\ge 1$, set
$$E_{m,\nu}(I):=\{|v_k(i)| \le k^{-\nu} \quad \forall k\ge m\quad \forall i\in I\}.$$
Note that on $E_{m,\nu}(I)$, at each time $k\ge m$, the probability to jump to some vertex $i\in I$, is bounded above
by $p_k:=N^{1+\alpha}\, k^{-\alpha\nu}$. Let now $(\xi_k)_{k\ge m}$ denotes some sequence of independent Bernoulli random variables with
respective parameters $(p_k)_{k\ge m}$. Then for any $n\ge m$, on $E_{m,\nu}(I)$ the number of jumps on $I$ between time
$m$ and $n$ is stochastically dominated by
$$Z_n:=\sum_{k=m}^n \, \xi_k.$$
However, it is well known that a.s. $\limsup Z_n/n^{1-\nu'}<\infty$, for any $\nu'< \alpha \nu\wedge 1$.
We deduce that a.s. for any $\nu'<\alpha\nu \wedge 1$,
$$E_{m,\nu}(I) \ \subseteq \ E_{\nu'}(I).$$
Since $E_\nu(I) \subseteq \cup_m E_{m,\nu(I)}$, we deduce that a.s. for any $\nu'< \alpha\nu \wedge 1$,
$$E_\nu(I) \ \subseteq \ E_{\nu'}(I).$$
Since $\alpha>1$, it follows by induction that a.s.
$$E_\nu(I) \ \subseteq \ E_{\beta}(I),$$
for any $\beta \in (1/\alpha,1)$.
But a simple application of the Borel-Cantelli lemma shows that for any such $\beta$,
a.s. on $E_{\beta}(I)$, the set $I$ is visited
only finitely many times. This concludes the proof of the lemma.
\end{proof}

\section{The case of the VRRW on a complete graph}
\label{section-complet}
In this section we study in detail the case of the VRRW on a complete graph described in
the introduction. In other words,  $A$ is given by (\ref{basicA}).

Since the case $N=2$ is trivial, we assume in all this section that $N\ge 3$.

\medskip
We first study the stability of the centers of the faces.
As  already explained, this reduces to analyze  the center of $\Delta$.

\blem Let $v=(1/N,\dots,1/N)$ be the center of $\Delta$.
Then $v$ is a linearly stable (respectively unstable) equilibrium  if $\alpha < (N-1)/(N-2)$,
(respectively $\alpha > (N-1)/(N-2)$).
\elem
\begin{proof}
Lemma \ref{stabface} shows that for all $i\neq j$,
$$D_{e_i-e_j}F(v) = \left(-1+\alpha\left(\frac{N-2}{N-1}\right)\right)(e_i-e_j).$$
The lemma follows immediately.  \end{proof}

\medskip
\noindent By combining this lemma with Corollary \ref{corstabface}, we get
\blem
\label{centerface}
Let $v$ be the center of the face $\Delta_I$ associated with some subset $I$ with cardinality $k\le N$.
Then $v$ is a linearly stable (respectively unstable) equilibrium  if $\alpha< (k-1)/(k-2)$, (respectively $\alpha>(k-1)/(k-2)$).
\elem

It remains to study the stability of the other equilibria. We will see that they are all unstable, which will conclude
the proof of Theorem \ref{maintheo}.

First we need the following lemma, which shows that coordinates of equilibriums take at most two different values.

\blem \label{lem43}
Let $v$ be an equilibrium in the interior of $\Delta$, which is different from its center.
Then $\# \{v_i\ :\ i\le N\}=2$.
\elem
\begin{proof} Let $a=\sum_i v_i^\alpha$ and $b=\sum_i v_i^{2\alpha}$. Since $v$ is an equilibrium, we have for all $i$,
$$v_i=v_i^{\alpha}\, \frac{a-v_i^\alpha}{a^2-b}.$$
Since all coordinates of $v$ are positive by hypothesis, this is equivalent to
$$f(v_i)=a-b/a,$$
for all $i$, where $f(x)=-x^{2\alpha-1}/a+x^{\alpha-1}$.
Now observe that
$$f'(x)=x^{\alpha-2}\big(-(2\alpha -1)x^{\alpha}/a+(\alpha-1)\big),$$
does not vanish on $(0,1)$ if $a\ge(2\alpha-1)/(\alpha-1)$,
and vanishes in exactly one point otherwise. Thus for any fixed $\lambda\in \RR$,
the equation $f(x)=\lambda$,
has at most one solution in $(0,1)$, if $a\ge(2\alpha-1)/(\alpha-1)$, and at most two otherwise.
The lemma follows.
\end{proof}

\medskip
Let $v$ be an equilibrium in the interior of $\Delta$, which is different from its center.
Lemma \ref{lem43} shows that its coordinates take exactly two different values, say $u_1$ and $u_2$.
Since the action of permutation of the coordinates commutes with $F$,
we can always assume w.l.g. that $v_i=u_1$, for $i\le k$, and $v_i=u_2$, for $i>k$,
for some $k\le N/2$.
Denote now by $E_k$ the set of such equilibria (those in the interior of $\Delta$, not equal to the center of $\Delta$,
and having their first $k$ coordinates equal as well as their last $N-k$ coordinates).
For $v\in E_k$, we also set $t(v)=v_N/v_1$. We have the

\blem \label{unstable1} Assume that $\alpha\ge (N-1)/(N-2)$, and let $v \in E_k$, with $k\le N/2$.
If $k>1$ or if $t(v)<1$, then $v$ is linearly unstable.
\elem
\begin{proof}
It follows from Lemma \ref{stabface} that for any $i<j\le k$,
$$D_{e_j-e_i}F(v) = \lambda_1 (e_j-e_i),$$
and for $k+1\le i<j\le N $,
$$D_{e_j-e_i}F(v) = \lambda_2 (e_j-e_i),$$
with
$$\lambda_m=\left( (\alpha-1)-\alpha \frac{u_m^{2\alpha-1}}{H(v)}\right),$$
for $m=1,2$. Then it just suffice to observe that if $u_1<u_2$,
$\lambda_1>0$, whereas if $u_1>u_2$, $\lambda_2>0$. Thus if any of the hypotheses of the lemma is satisfied,
there $DF(v)$ has at least one positive eigenvalue, and so $v$ is linearly unstable.
\end{proof}
\noindent On the other hand we have the following:
\blem \label{lem46} Let $v \in E_k$ be given. If $k=1$ and $\alpha\ge (N-1)/(N-2)$, then $t(v)<1$ and $v$ is unstable.
Similarly, if $\alpha < (N-1)/(N-2)$, then $v$ is linearly unstable.
\elem
\begin{proof}
Following our previous notation, set $u_1:=v_1$ and $u_2:=v_N$, and recall that $u_1\neq u_2$, since $v$ is not equal
to the center of $\Delta$. Recall also that $t(v)=u_2/u_1$. Note that since $ku_1+(N-k)u_2=1$, we have
$u_1=1/(k+(N-k)t)$. Now the fact that $v$ is an equilibrium
means that $F(v)=0$, which is equivalent to say that the function $\varphi$ defined by
$$\varphi(t):=-(N-k-1)t^{2\alpha-1} +(N-k)t^\alpha -k t^{\alpha-1} + (k-1),$$
vanishes at point $t(v)$.
We now study the function $\varphi$. First $\varphi'(t)=t^{\alpha-2} \psi(t)$, with
$$\psi(t)=-(2\alpha-1)(N-k-1)t^\alpha +\alpha (N-k)t-(\alpha-1)k.$$
In particular $\psi$ is strictly concave, $\psi(0)<0$ and $\lim_{t\to \infty} \psi(t) = -\infty$.
Now two cases may appear. Either $\psi$ vanishes in at most one point, in which case $\varphi$ is nonincreasing, thus vanishes
only in $1$. But this case is excluded, since $t(v)\neq 1$ by hypothesis. In the other case
$\psi$ vanishes in exactly two points, which means that there exist $t_1<t_2$ such that $\varphi$ is decreasing in
$(0,t_1)\cup(t_2,\infty)$, and increasing in $(t_1,t_2)$.
Now consider first the case when $k=1$, which implies $\varphi(0)=0$.
If $\alpha\ge (N-1)/(N-2)$, then $\varphi'(1)\le 0$, thus $\varphi$ has at most one zero in $(0,1)$ and no zero
in $(0,\infty)$. Together with Lemma \ref{unstable1} this proves the first part of the lemma. If $\alpha>(N-1)/(N-2)$,
then $\varphi'(1)>0$, and thus $\varphi$ has no zero in $(0,1)$ and exactly one zero in $(0,\infty)$, in which the derivative
of $\varphi$ is negative. The fact that this zero corresponds to an unstable equilibrium will then follow from \eqref{fact} below.
But before we prove this fact, let us consider now the case $k>1$ and $\alpha<(N-1)/(N-2)$, which
imply that $\varphi(0)>0$ and $\varphi'(1)>0$. Thus $\varphi$ vanishes in exactly one point in $(0,1)$
and another one in $(1,\infty)$, and again the derivative of $\varphi$ in these points is negative.
So all that remains to do is to prove the following fact
\begin{eqnarray}
 \label{fact}
\textrm{If $v\in E_k$ is such that $\varphi'(t(v))<0$, then $v$ is unstable.}
\end{eqnarray}
Let us prove it now. For $t\in(0,\infty)$, set $u_1(t)=1/(k+(N-k)t)$ and $u_2(t) = tu_1(t)$.
Then let $v(t)\in (0,1)^N$, be the point whose first $k$ coordinates are equal to $u_1(t)$ and whose last $N-k$
coordinates are equal to $u_2(t)$. Then we have
$$H(v(t)) = k(k-1)u_1(t)^{2\alpha}+2k(N-k) t^\alpha u_1(t)^{2\alpha} + (N-k)(N-k-1)t^{2\alpha}u_1(t)^{2\alpha},$$
and after some computation we find that
$$F(v(t))= -\frac{tu_1(t)^{2\alpha+1}}{H(v(t))}\, \varphi(t)\ e_k,$$
where $e_k$ is the vector, whose first $k$ coordinates are equal to $-(N-k)$ and whose last $N-k$
coordinates are equal to $k$. Then notice that $D_{e_k}F(v(t))=c(d/dt)F(v(t))$, for some constant $c>0$.
But recall that when
$v(t)$ is an equilibrium, $\varphi(t) = 0$. Thus
$$\frac d{dt} F(v(t)) = - \frac{tu_1(t)^{2\alpha+1}}{H(v(t))}\, \varphi'(t)\, e_k.$$
So if $\varphi'(t)<0$, $e_k$ is an unstable direction, proving \eqref{fact}. This concludes the proof of the lemma.
\end{proof}

\medskip
\blem \label{lem44} The number of equilibriums in $\Delta$ is finite. \elem
\begin{proof}
Let $v\in \Delta$, such that that $v_i=u_1>0$, for $i\le k$, and $v_i=u_2>0$, for $i>k$,
for some $k\le N/2$. Set $t=u_2/u_1$, so that $u_1=(k+\ell t)^{-1}$ and $u_2=t u_1$ and where $\ell =N-k$. 
Recall (see the proof of Lemma \ref{lem46}) that $v$ is an equilibrium if and only if $\phi(t)=0$, where $\phi:]0,\infty[\to \RR$ is defined by
$$\phi(t)=(\ell-1) t^{2\alpha-1} - \ell t^\alpha + k t^{\alpha -1} -(k-1).$$
Studying $\phi$, we show that two situations may occur: $1$ is the only solution or there exist two other solutions, distinct from $1$. This proves that $E_k$ contains at most $2$ equilibriums. 
And therefore, one can check that the number of equilibriums in the interior of $\Delta$ is at most $2^N-1$. 
\end{proof}

\noindent Lemmas \ref{unstable1} and \ref{lem46} show that any equilibrium in the interior of $\Delta$, which is not equal to the center of $\Delta$ is linearly unstable. Together with Lemma \ref{centerface}, Lemma \ref{lem44} and Theorems \ref{nonconv}, \ref{convpos} and \ref{convpos2}  this concludes the proof of Theorem \ref{maintheo}. $\square$

\section{Almost sure localization}
In this section we prove a general result on the localization of 
strongly reinforced VRRWs. First we recall that given a locally finite graph $G$, and $w:\NN\to (0,\infty)$,  
we can define the VRRW $(X_n)_{n\ge 0}$ by 
$$\P(X_{n+1}=y\mid \cF_n) = \frac{w(Z_n(y))}{\sum_{z\sim X_n} w(Z_n(z))} \mathbf{1}_{\{X_n\sim y\}},$$
with $Z_n(\cdot)$ as in the introduction, and where the symbol $X_n\sim y$ means that $X_n$ and $y$ are neighbours in $G$. 
We say that the VRRW is strongly reinforced when 
\begin{eqnarray}
 \label{SRweight} 
\sum_{n=0}^\infty \frac 1{w(n)} <\infty. 
\end{eqnarray}
It is easy to see that under this hypothesis the walk can localize on two sites with positive probability. 
But something more can be said: 

\begin{theorem}
Let $w:\NN\to (0,\infty)$ be a weight function satisfying \eqref{SRweight}, 
and let $G$ be a graph with bounded degree. Then a.s. the VRRW on $G$ with weight $w$ visits only a finite number of sites. 
\end{theorem}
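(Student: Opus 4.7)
The plan is to apply a conditional Borel--Cantelli argument to the event of discovering a new site. For $n \geq 0$, let $\cD_n$ denote the event that $X_{n+1}$ is a vertex not visited before time $n+1$. Since visiting infinitely many sites is equivalent to $\cD_n$ occurring infinitely often, by L\'evy's version of the Borel--Cantelli lemma it suffices to prove
$$\sum_{n \geq 0} \P(\cD_n \mid \cF_n) < \infty \qquad \text{a.s.}$$
A direct computation, using that $X_n$ has at most $D$ neighbors (with $D$ an upper bound on the degrees in $G$) and each unvisited neighbor contributes exactly $w(0)$ to the numerator, yields $\P(\cD_n \mid \cF_n) \leq D\, w(0)/W_n$ where $W_n := \sum_{z \sim X_n} w(Z_n(z))$. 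The task thus reduces to showing $\sum_n 1/W_n < \infty$ almost surely.

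The natural approach is a Rubin-type coupling embedding the VRRW into continuous time. For each vertex $v \in V(G)$ and each integer $k \geq 1$ I would introduce independent $\xi_v^k \sim \mathrm{Exp}(1)$, and realize the walker so that its $k$-th visit to $v$ occurs once the cumulative ``effective time'' spent at neighbors of $v$, namely $\int_0^t \mathbf{1}_{X_s \sim v}\, ds$, reaches $\sum_{j=1}^k \xi_v^j/w(j-1)$. The hypothesis \eqref{SRweight} ensures that $T_v := \sum_{k \geq 1} \xi_v^k/w(k-1)$ is a.s.\ finite, with $\E[T_v] = \sum_k 1/w(k)$. Hence the total effective time the walker spends at neighbors of any given $v$ is bounded above by $T_v$. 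Combining this with $\sum_v \int_0^{T_\infty} \mathbf{1}_{X_s \sim v}\, ds = \int_0^{T_\infty} \deg(X_s)\, ds \leq D\, T_\infty$ and the identity $\E[T_\infty] = \sum_n \E[1/W_n]$ relates the problem to controlling the collection of Rubin budgets $\{T_v\}_v$.

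The main obstacle, and the crux of the proof, is that a naive summation $\sum_v T_v$ over all of $V(G)$ is infinite, so one cannot conclude $T_\infty < \infty$ directly from the per-vertex bounds. One must show that the walker can only consume a uniformly positive fraction of each budget $T_v$ for those $v$'s whose neighborhoods it actually enters, and that each new discovery after the $m$-th one must be preceded by a substantial consumption of the remaining budgets of previously visited sites. A bootstrap or stopping-time argument along these lines, exploiting both the bounded-degree structure of $G$ and the strong reinforcement hypothesis, should ultimately force the number of visited vertices to be finite a.s., thereby establishing $\sum_n 1/W_n < \infty$ and closing the Borel--Cantelli reduction.
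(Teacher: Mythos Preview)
Your reduction via the conditional Borel--Cantelli lemma and the bound $\P(\cD_n\mid\cF_n)\le Dw(0)/W_n$ is valid, and the Rubin embedding is a natural tool. However, the proposal stops precisely where the real work begins: the last paragraph is a list of desiderata rather than an argument. You correctly note that the budgets $T_v$ do not sum over all of $V(G)$, and then assert that ``a bootstrap or stopping-time argument along these lines \ldots\ should ultimately force the number of visited vertices to be finite.'' No such argument is supplied. Moreover, the displayed relations go in the wrong direction for your purpose: the identity $\sum_v\int_0^{T_\infty}\mathbf{1}_{\{X_s\sim v\}}\,ds\le DT_\infty$ bounds from above a quantity you have already bounded termwise by $T_v$; neither inequality yields any control on $T_\infty$ itself. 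The relation $\E[T_\infty]=\sum_n\E[1/W_n]$ does not help either, since you need an almost-sure statement and $\E[T_\infty]$ may well be infinite.

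The paper's proof follows a different strategy, adapted from Volkov's argument for trees. It uses a Rubin construction with clocks on \emph{oriented edges}, defines for each vertex $x$ an event $E_x(N)$ (that every tail $\sum_{\ell\ge N}\xi_\ell^{(x,y)}$ is smaller than the first ring of any fresh outgoing clock), chooses $N$ so that $\P(E_x(N))\ge 1-\varepsilon$ uniformly in $x$, and then classifies each newly discovered vertex as type $1$, $2$ or $3$ according to whether a neighbour, or a neighbour of a neighbour, has already been visited at least $N$ times. Type-$3$ discoveries are disposed of by a direct conditional Borel--Cantelli: at each such vertex the walk has probability bounded below by some $p>0$ of getting stuck on two sites forever. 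For types $1$ and $2$, the paper builds a growing sequence of explored subgraphs $(G_n)$, tracks the size $R_n$ of the ``frontier'' through which the walk could exit $G_n$, and shows via the events $E_x(N)$ that the increments of $R_n$ are dominated by a supermartingale with strictly negative drift (this is where the choice $\varepsilon d^3<1$ enters). Thus $R_n\to-\infty$ on the event of infinitely many discoveries, a contradiction. The idea missing from your approach is exactly this localisation mechanism: a concrete device showing that each new discovery near a heavily visited site tends, with probability at least $1-O(\varepsilon)$, to \emph{shrink} rather than enlarge the set of potential future escape points.
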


\begin{proof}
We adapt an argument of Volkov who proved the result when the graph is a tree (see \cite{V1}). 
Let $d>0$ be such that any vertex in $G$ has degree at most $d$ and let $\varepsilon \in (0,1)$ be such that 
$\varepsilon d^3 <1$. 

\vspace{0.2cm}
It will be convenient to introduce a Rubin's construction of the walk. Similar ones were introduced already by Sellke \cite{Sel} and Tarr\`es \cite{T2}, see also \cite{BSS}. 

\vspace{0.2cm}
To each pair of neighbours $x\sim y$ of $G$ we associate two oriented edges $(x,y)$ and $(y,x)$, 
and denote by $\cE$ the set of oriented edges obtained in this way. 
Then we consider a sequence 
$(\xi^e_\ell)_{e\in \cE, \ell \ge 0}$ of independent exponential random variables, such that for any $e$ and $\ell$, 
the mean of $\xi_\ell^e$ equals $1/w(\ell)$. We will also use the notation: 
$$\xi_x=(\xi_\ell^{(x,y)})_{y\sim x, \ell \ge 0},$$
for any vertex $x$ of $G$. Then given the sequence $(\xi_x)_{x\in G}$, and a vertex $x_0$ of $G$, 
we construct a continuous time process $(\tilde X_t)_{t\ge 0}$
as follows. 

\vspace{0.2cm}
First $\tilde X_0=x_0$. Then for each oriented edge $(x_0,y)$, we let a clock ring at time $\xi_0^{(x_0,y)}$, and 
let the process jump along the edge corresponding to the clock which rings first (at time 
$\min_{y\sim x_0} \xi_0^{(x_0,y)}$). At this time we also stop all clocks associated to the edges going out of $x$.  

\vspace{0.2cm}
Now assuming that $(\tilde X_t)$ has been constructed up to some jumping time $t$, and that $\tilde X_t=x$, 
we use the following rule. 
For any $y\sim x$, if during the time between the last visit to $x$ before $t$
(which we take to be $0$ if $t$ is the first time the walk visits $x$) and $t$ the walk has visited $y$, then start 
a new clock associated to the edge $(x,y)$ which will ring at time $\xi_k^{(x,y)}$, 
with $k$ the number of visits to $y$ before time $t$. If not, then restart the clock associated to $(x,y)$ which has been stopped 
after the last visit of the walk in $x$, or start a new clock which will ring after a time $\xi_0^{(x,y)}$ if both $x$ and $y$ 
have never been visited yet before time $t$. Then the walk jumps again along the edge associated to the clock which rings first at 
the time when it rings, and then we stop all clocks associated to edges going out of $x$. Then it is a basic and well known fact that a.s. two clocks never ring at the same time, so that this construction is a.s. well defined, and that if $(\tau_k)_{k\ge 0}$ is the sequence of jumping times (with $\tau_0=0$), then $(\tilde X_{\tau_k})_{k\ge 0}$ has the same law  
as the process $X$ starting from $x_0$. 

\vspace{0.2cm}
Now for any vertex $x$ and integer $N\ge 1$ we consider the following event 
$$E_x(N):= \left\{ \sum_{\ell \ge N} \xi_\ell^{(x,y)} < \min_{z\sim x} \xi_0^{(x,z)}\quad \forall y\sim x\right\}.$$
Note that, denoting by $d_x$ the degree of $x$,
\begin{eqnarray*}
\P(E_x(N)) &=& \prod_{y \sim x}\E\left[ e^{-d_x w(0) \sum_{\ell \ge N} \xi_\ell^{(x,y)}}\right] \\
&=&  \prod_{l \geq N}\left( 1-\frac{d_x w(0)}{w(\ell)}\right)^{d_x} \ge \prod_{l \geq N}\left( 1-\frac{d w(0)}{w(\ell)}\right)^d
\end{eqnarray*}
using in the last inequality that $d_x\le d$.
So that, when \eqref{SRweight} is satisfied,  we can fix $N$ such that $\P(E_x(N)) \ge 1-\varepsilon$, for all $x$.

\vspace{0.2cm}
To each vertex $x$, let $\tau_x$ be the first time the walk visits $x$. 
When $\tau_x<\infty$, we associate to $x$ a type:
\begin{itemize}
\item the type $1$ if at time $\tau_x$ one of its neighbours has been visited more than $N$ times;
\item the type $2$ if $x$ is not of type $1$ and if every neighbour of $x$ has at least one of its neighbours which has been visited more than $N$ times before $\tau_x$;
\item the type $3$ otherwise.
\end{itemize}
We also associate two sets of vertices, $C^1_x$ and $C^2_x$\,: 
when $x$ is of type $1$  and on $E_x(N)$,  $C^1_x=\emptyset$,   
otherwise $C^1_x$ is the set of all neighbours of $x$ that have not been visited by the walk at time $\tau_x$.
Thus $C^1_x$ contains all the vertices that may be visited for the first time after a jump from $x$.

Define next $C^2_x:=\cup_{y\in C^1_x} C^2_{x,y}$, with $C^2_{x,y}$ as follows. When $y$ has a neighbour that has been visited more than $N$ times at time $\tau_x$ and on $E_y(N)$, $C^2_{x,y}=\emptyset$,   
otherwise $C^2_{x,y}$  is the set of all neighbours of $y$ that have not been visited by the walk at time $\tau_x$.
Thus $C^2_x$ contains all the vertices that may be visited for the first time after a jump from a vertex in $C^1_x$.

\vspace{0.2cm}
Observe that when $x$ is of type $3$,
conditionally to $\cF_{\tau_x}$, the walk has a probability bounded below by some constant $p>0$, depending only on $N$, in particular independent of $x$ and $\cF_{\tau_x}$, to be stuck on $\{x,y\}$ for some $y\sim x$  forever after $\tau_x$. So the conditional Borel-Cantelli lemma ensures that only finitely many $x$'s can be of type $3$.

\vspace{0.2cm}
Now we define inductively a growing sequence of subgraphs $(G_n)_{n\ge 0}\subset G$, an increasing sequence of random times $(\tau_n)_{n\ge 0}$ and a sequence of vertices $(x_n)_{n\ge 0}$ as follows. 
First  $G_0=\{x_0\}\cup C^1_{x_0}$ and $\tau_0=0$. Note that $C^1_{x_0}=\{y\sim x_0\}$. 
Now assume that $G_k$, $\tau_k$ and $x_k$ have been defined for all $k\le n$, for some $n\ge 0$. 
If the walk never exists the set $G_n$, then set $G_{n+1}=G_n$ and $\tau_{n+1}=\infty$.
Otherwise let $\tau_{n+1}$ be the first exit time of $G_n$, $x_{n+1}$ be the position of the walk at time $\tau_{n+1}$ and set $G_{n+1}=G_n\cup \{x_{n+1}\}\cup C^1_{x_{n+1}}$.

\vspace{0.2cm}
For all $n\ge 0$ with $\tau_n<\infty$, set $O_n=\cup_{k=0}^n C^2_{x_k}\setminus G_n$, so that $\tau_{n+1}$ is the first entrance time of the walk in $O_n$.
Observe that if for some $n$, $O_n=\emptyset$, then $\tau_{n+1}=\infty$ and thus the walk will visit only a finite number of sites. Note also conversely that if the walk visits an infinite number of vertices,  then $\tau_n<\infty$, for all $n$.

\vspace{0.2cm}
Define now a filtration $(\cG_n)_{n\ge 0}$ by $\cG_n=\sigma(\xi_x\, :\  x\in G_n)$.
Then, we have that $\sigma(X_k:\; k\le \tau_{n+1})\subset \cG_n$, 
and that $x_{n+1}$ and its type are $\cG_n$ measurable.
Likewise the number $R_n$ of sites in $O_n$ is also $\cG_n$-measurable. 
Moreover, since $x_{n+1}\in O_n$, we have 
$R_{n+1}\le R_n - 1 + |C^2_{x_{n+1}}|$ (where if $A$ is a set we denote by $|A|$ its cardinality). 
Set, for $n\ge 1$, $\ell_{n}=-1$ if $x_{n}$ is of type $1$ and on $E_{x_{n}}(N)$ 
or if $x_{n}$ is of type $2$ and on $\cap_{y\in C^1_{x_{n}}} E_y(N)$, and set $\ell_{n}=d^2-1$ otherwise. Thus $\ell_{n}$ is $\cG_{n}$-measurable and for $n\ge 0$, $R_{n+1}-R_n\le -1+|C^2_{x_{n+1}}|\le \ell_{n+1}$. 
Now it is easy to see that if $x_{n+1}$ is of type $1$, 
$$\EE[\ell_{n+1} \mid \cG_n]\le -1+\epsilon d^2 < 0.$$
Similarly if $x_{n+1}$ is of type $2$, 
$$\EE[\ell_{n+1} \mid \cG_n]\le  -1+(1-(1-\epsilon)^d) d^2\le -1 + \epsilon d^3  < 0.$$
Thus, setting $A_n:={\{x_n \hbox{ is of type $1$ or $2$}\}}$, we get that $L_n :=\sum_{k=1}^n \ell_k \one_{A_k}$ is a supermartingale. 
Therefore, a.s. on the event $A_n$ occurs an infinite number of times,  
$\liminf_{n\to\infty} L_n=-\infty$ (using that on this event, $L_n$ cannot converge since on $A_n$, $|\ell_{n+1}|\ge 1$).

\vspace{0.2cm} 
Therefore a.s., if the walk visits an infinite number of vertices, then since $x_n$ is of type $3$ for only a finite number of $n$, a.s. on the event $A_n$ occurs an infinite number of times, $\liminf_{n\to\infty}R_n=-\infty$, which is absurd since, $R_n$ is nonnegative by definition. This proves the lemma.
\end{proof}

\bigskip

\paragraph*{Acknowledgements}
We acknowledge financial support from the Swiss National Foundation Grant  FN 200021-138242/1 and the french ANR project MEMEMO2 2010 BLAN 0125.
\medskip

{\footnotesize %
   \noindent Michel~\textsc{Bena\"im},
e-mail: \texttt{michel.benaim(AT)unine.ch}

\noindent\textsc{Institut de Math\'ematiques, Universit\'e de Neuch\^atel,
11 rue \'Emile Argand, 2000 Neuch\^atel, Suisse.}
\bigskip

\noindent Olivier \textsc{Raimond},
e-mail: \texttt{olivier.raimond(AT)u-paris10.fr}

 \noindent\textsc{MODAL'X Universit\'e Paris Ouest,
   200 avenue de la r\'epublique, 92000 Nanterre, France.}
 \bigskip

 \noindent Bruno \textsc{Schapira},
 e-mail: \texttt{bruno.schapira(AT)latp.univ-mrs.fr}

 \noindent\textsc{Centre de Math\'ematiques et Informatique,
Aix-Marseille Universit\'e, Technop\^ole Ch\^ateau-Gombert, $39$ rue F. Joliot Curie, 13453 Marseille, France.}
}

\end{document}